\newcommand{\EQ}[1]{\begin{equation}\begin{split} #1 \end{split}\end{equation}}
\newcommand{\EQN}[1]{\begin{equation*}\begin{split} #1 \end{split}\end{equation*}}
\def\normo#1{\left\|#1\right\|}
\def\aabs#1{\left|#1\right|}
\def\brk#1{\left(#1\right)}
\def\norm#1{\|#1\|}
\def\wh#1{\widehat{#1}}
\newcommand{\R}{{\mathbb R}}
\newcommand{\Z}{{\mathbb Z}}
\newcommand{\ft}{{\mathcal{F}}}
\newcommand{\les}{{\lesssim}}
\newcommand{\Sch}{{\mathcal{S}}}
\newcommand{\dr}{\omega}
\theoremstyle{plain}
\newtheorem{theorem}[subsection]{Theorem}
\newtheorem{proposition}[subsection]{Proposition}
\newtheorem{lemma}[subsection]{Lemma}
\theoremstyle{remark}
\theoremstyle{definition}
\numberwithin{equation}{section}
\begin{document}

\title[Local well-posedness of the KP-I equation]{Remark on the low regularity well-posedness of the KP-I equation}

\subjclass[2010]{35A02, 35E15, 35Q53}
\keywords{KP-I equation, Well-posedness, Low regularity}

\author{Zihua Guo}
\address{School of Mathematics, Monash University, Melbourne, VIC 3800, Australia}
\email{zihua.guo@monash.edu}

\begin{abstract}
We study the Cauchy problem to the KP-I equation posed on $\R^2$.  We prove that it is $C^0$ locally well-posed in $H^{s,0}(\R\times \R)$ for $s>1/2$, which improves the previous results in \cite{GPW,GMo}.  
\end{abstract}

\maketitle

\section{Introduction}

Consider the Cauchy problem for the KP-I equation
\begin{eqnarray}\label{eq:kpI}
\begin{cases}
\partial_tu+\partial_x^3u-\mu \partial_x^{-1}\partial_y^2u+\partial_x(u^2)/2=0;\\
u(x,y,0)=\phi(x,y),
\end{cases}
\end{eqnarray}
where $u(x,y,t):\R^3 \rightarrow \R$ is the unknown function,
$\phi$ is the given initial data and $\mu=1$. The KP-I equation \eqref{eq:kpI}
and the KP-II equation (when $\mu=-1$) arise in
physical contexts as models for the propagation of dispersive long
waves with weak transverse effects. 

It is easy to see that the KP equation is invariant under the following scaling transform: for $\lambda>0$
\EQN{
u(x,y,t)\to \lambda^2 u(\lambda x, \lambda^2 y, \lambda^3 t).
}
So the critical space for KP equation is $\dot H^{s_1,s_2}$ with $s_1+2s_2=-\frac{1}{2}$, in the sense of the scaling invariance.  The KP equation has the following conservation laws: 
\EQN{
M(u) = \int_{\R^2}\! u^{2} dxdy \quad \text{and} \quad 
E(u) = \int_{\R^2}\! \frac{1}{2} u_x^2+\mu \frac{1}{2}  (\partial_x^{-1} u_y)^2-
\frac{1}{6}  u^3 dxdy.
}
In view of the conservation laws, the natural space for the KP-I is $L^2$ or the energy space, while for KP-II the natural space is $L^2$.  Actually, like the KdV equation, the KP equation is completely integrable.  The conserved quantity for KP-II is not coercive apart from $L^2$, while for KP-I it has other coercive quantities.  We will not rely on any integrability structure in this paper. 

The mathematical theory for KP equations was extensively studied. Here we only recall some related results for KP-I equation. It is shown in \cite{MoSaTz1} that the Picard iterative approaches for the KP-I equation fail in standard Sobolev space and $H^{s_1,s_2}$ for $s_1,s_2\in \R$ because the solution flow map is not $C^2$ smooth at the origin, in contrast to the weighted Sobolev space results in \cite{CIKS}. Local well-posedness in $H^{s,0}$ for $s>3/2$ was proved in \cite{MoSaTz4}, and global well-posedness in the ``second" energy spaces was obtained in \cite{Kenig1, MoSaTz2, MoSaTz3}. Ionescu, Kenig and Tataru \cite{IKT}
proved global well-posedness in the natural energy space $\mathbb{E}^1=\{\phi\in
L^2(\R^2),\partial_x \phi \in L^2(\R^2), \partial_x^{-1}\partial_y \phi \in L^2(\R^2)\}$ by introducing the short-time $X^{s,b}$ space and energy estimate method. The result of \cite{IKT} was improved and proof was simplified in \cite{GPW} where local well-posedness was proved in $H^{s,0}$ for $s\geq 1$. In a recent paper \cite{GMo}, the author and Molinet proved unconditional LWP in $H^{s,0}$ for $s>3/4$ and unconditional GWP in the energy space.  In view of the $L^2$ conservation and Bourgain's $L^2$ global well-posedness for KP-II, the $L^2$ well-posedness for KP-I remains a challenging problem. 

The purpose of this note is to study the low regularity well-posedness of KP-I in the anisotropic Sobolev space $H^{s,0}$
which is defined by
\[
H^{s,0}=\{\phi \in L^2(\R^2):
\norm{\phi}_{H^{s,0}}=\norm{\wh{\phi}(\xi,\mu)(1+|\xi|^2)^{s/2}}_{L^2_{\xi,\mu}}<\infty\}.
\]
Note that $L^2=H^{0,0}$.

Now we state our main results:

\begin{theorem}\label{thmmain}

Assume $s>1/2$. Assume $u_0\in H^{\infty,0}(\R^2)$.  Then

(a) There exists
$T=T(\norm{u_0}_{H^{s,0}})>0$ such that there is a unique solution
$u=S^{\infty,0}_T(u_0)\in C([-T,T]:H^{\infty,0})$ of the Cauchy problem
\eqref{eq:kpI}. In addition, for any $\sigma\geq s$
\begin{eqnarray}
\sup_{|t|\leq T} \norm{S^\infty_T(u_0)(t)}_{H^{\sigma,0}}\leq
C(T,\sigma,\norm{u_0}_{H^{\sigma,0}}).
\end{eqnarray}

(b) Moreover, the mapping $S_T^{\infty,0}:H^{\infty,0}\rightarrow
C([-T,T]:H^{\infty,0})$ extends uniquely to a continuous mapping
\[S_T^{s,0}:H^{s,0} \rightarrow C([-T,T]:H^{s,0}).\]
\end{theorem}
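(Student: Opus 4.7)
The plan is to follow the short-time $X^{s,b}$ framework of Ionescu--Kenig--Tataru, as adapted to KP-I in \cite{IKT,GPW,GMo}, and to push the bilinear estimate into the range $s>1/2$. The key observation should be that the regularity threshold in this scheme is dictated exclusively by the dyadic bilinear estimate for $\partial_x(u^2)$, so the whole argument reduces to improving that estimate.

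First I would fix the usual dyadic decomposition in $\xi$, set up the frequency-localized short-time Bourgain spaces $F_k$ (scaled to time windows of length $2^{-k}$ for $x$-frequencies of size $2^k$), their nonlinear counterparts $N_k$, an energy space $B_k$, and assemble $F^s=(\sum_k 2^{2sk}\|P_k u\|_{F_k}^2)^{1/2}$ and $N^s$, $B^s$ analogously. The three pillars are then standard in form: a linear estimate $\norm{u}_{F^s}\les \norm{u}_{B^s}+\norm{(\partial_t+L)u}_{N^s}$ with $L=\partial_x^3-\partial_x^{-1}\partial_y^2$; a bilinear estimate $\norm{\partial_x(uv)}_{N^s}\les \norm{u}_{F^s}\norm{v}_{F^s}$ valid for $s>1/2$; and an energy estimate $\norm{u}_{B^s}^2\les \norm{u_0}_{H^{s,0}}^2+\norm{u}_{F^s}^3$ coming from a symmetrized commutator computation. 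Combining these by a standard continuity-in-$T$ bootstrap gives the a priori bound $\norm{u}_{F^s([-T,T])}\les \norm{u_0}_{H^{s,0}}$ on a time $T=T(\norm{u_0}_{H^{s,0}})>0$, and hence part (a).

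The heart of the proof is the bilinear estimate for $s>1/2$. I would decompose $\partial_x(uv)=\sum_{k_1,k_2,k_3}\partial_x P_{k_3}(P_{k_1}u\, P_{k_2}v)$ and analyse the three classical interactions (high-high-to-low, high-low-to-high, high-high-to-high) using the KP-I resonance function
\EQN{
\Omega=\xi_1^3+\xi_2^3-(\xi_1+\xi_2)^3-\frac{\mu_1^2}{\xi_1}-\frac{\mu_2^2}{\xi_2}+\frac{(\mu_1+\mu_2)^2}{\xi_1+\xi_2},
}
together with the short-time $L^2$ Strichartz / $L^4$ bilinear estimates already established in \cite{IKT,GPW,GMo}. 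The high-low piece, where one frequency is $2^{k}$ and the other $\ll 2^{k}$, is the dangerous case: the $\partial_x$ and the loss of one short-time factor combine to $2^{k}\cdot 2^{-k/2}$, and to close the estimate one needs the low factor's norm to be summable, which is precisely where $s>1/2$ enters via Cauchy--Schwarz in the low-frequency index. The high-high-to-low and high-high-to-high contributions are already comfortable at $s\geq 0$ because of the favourable modulation gain coming from $|\Omega|\gtrsim \xi_{\max}^2 \xi_{\min}$ under transversality in $\xi$, with the resonant subregion ($|\xi_1+\xi_2|$ tiny) absorbed using the $X^{s,b}$ modulation.

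For part (b), I would combine the a priori bound with a Lipschitz estimate for differences at a lower regularity $s'\in(1/2,s)$: if $u_1,u_2$ are smooth solutions with data of size $R$ in $H^{s,0}$, then $w=u_1-u_2$ satisfies $\partial_t w+Lw+\partial_x(\frac{u_1+u_2}{2}w)=0$, and the same short-time machinery, applied with one factor estimated in $F^s$ and the other in $F^{s'}$, yields $\norm{w}_{F^{s'}([-T,T])}\les \norm{w(0)}_{H^{s',0}}$ on a common time $T=T(R)$. With the uniform $H^{s,0}$ bound and this Lipschitz bound at $s'$, a Bona--Smith approximation argument (mollifying rough data $u_0\in H^{s,0}$ to $u_0^N\in H^{\infty,0}$ and using the tight rate $\norm{u_0-u_0^N}_{H^{s',0}}=o(1)\cdot 2^{N(s'-s)}\norm{u_0}_{H^{s,0}}$) promotes the smooth solution map to a continuous map $S_T^{s,0}:H^{s,0}\to C([-T,T];H^{s,0})$, completing (b). The main obstacle is, as indicated above, closing the bilinear estimate in the high-low regime at the threshold $s=1/2$; everything else should be a careful adaptation of the arguments in \cite{GPW,GMo}.
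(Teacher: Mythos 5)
Your overall framework (short-time norms $F^s$, $N^s$, $E^s$; linear, bilinear and energy estimates; then a Bona--Smith argument) is the same as the paper's, but there are two concrete gaps. First, you locate the threshold $s>1/2$ in the high-low case of the bilinear estimate and leave the improvement of that case as the ``main obstacle'' without supplying it. In the paper the bilinear estimate of Proposition \ref{prop:bilinear}(a) actually closes for all $s\geq 0$; the new ingredient making this possible is the dyadic high-low bound \eqref{eq:hilow}, $\norm{P_{k_3}\partial_x(u_{k_1}v_{k_2})}_{N_{k_3}}\les (1+k_1)2^{-k_1/2}\norm{u_{k_1}}_{F_{k_1}}\norm{v_{k_2}}_{F_{k_2}}$, obtained by splitting the output-modulation sum at $j_3=k_3+2k_1$ and using \eqref{eq:3zestb1} above that level and \eqref{eq:3zesta} in the middle range. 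The threshold $s>1/2$ enters instead through the energy estimate (Proposition \ref{prop:energy}), where Lemma \ref{lem:3linear}(b) carries a factor $2^{k_1/2}$ on the low frequency and the summation over $k_1$ forces the $\norm{u}_{F^{1/2+}(T)}$ factor; your heuristic (``$2^k\cdot2^{-k/2}$ plus summability of the low factor'') describes this energy-estimate mechanism rather than the bilinear one, and without the quantitative dyadic gain $(1+k_1)2^{-k_1/2}$ the scheme would not improve on \cite{GPW}.

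Second, and more seriously, your plan for part (b) rests on a Lipschitz estimate for differences at a regularity $s'\in(1/2,s)$. This is not available from this machinery: in the difference equation $\partial_t w+\partial_x^3w-\partial_x^{-1}\partial_y^2w+\partial_x(w(u_1+u_2)/2)=0$ the coefficient $u_1+u_2$ cannot be symmetrized away, and the interaction in which $w$ sits at low frequency against $P_{k_1}(u_1+u_2)$ at high frequency $k_1\sim k$ produces, after the trilinear bound, a leftover factor of order $2^{k(s'+1-s)}$, which is summable only if $s'\leq s-1$. This is exactly why the paper proves Lipschitz dependence only at the level $H^{s-1,0}$ (estimate \eqref{eq:L2conti} of Proposition \ref{prop:energydiff}), needs the asymmetric bilinear estimate of Proposition \ref{prop:bilinear}(b) at the possibly negative regularity $s-1\geq-1/2$, and compensates in the Bona--Smith step with the extra term $\norm{\phi_1}_{H^{s+1,0}}\norm{\phi_1-\phi_2}_{H^{s-1,0}}$ in \eqref{eq:H1conti}. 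Without these adjustments your continuity argument for $S_T^{s,0}$ does not close.
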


The proof of the above theorem is based on some refinements of the arguments in \cite{GPW}, which utilized the method introduced by Ionescu-Kenig-Tataru \cite{IKT}.

\medskip

\noindent {\bf Notations.} We will use the same notations as in \cite{GPW}. 

$\bullet$ For $x, y\in \R^+$, $x\les y$ means that $\exists C>0$ such
that $x\leq Cy$.  

$\bullet$ For $f\in \Sch'$, $\widehat{f}$ or $\ft (f)$ denotes the space-time Fourier
transform of $f$.

$\bullet$ For $k\in \Z$ let
$I_k=\{\xi:|\xi|\in [(3/4)\cdot 2^k,
(3/2)\cdot 2^k)\}$ and $I_{\leq k}=\cup_{j\leq k}I_j$.

$\bullet$ Let $\eta_0: \R\rightarrow [0, 1]$ denote an even smooth function
supported in $[-8/5, 8/5]$ and equal to $1$ in $[-5/4, 5/4]$. For
$k\in \Z$ let $\chi_k(\xi)=\eta_0(|\xi|/2^k)-\eta_0(|\xi|/2^{k-1})$. For $k\in \Z$, $\widehat{P_ku}(\xi)=\chi_{k}(\xi)\widehat{u}(\xi,\mu)$, $P_{\leq k}=\sum_{l\leq k}P_l$

$\bullet$ For $(\xi,\mu)\in \R \setminus \{0\}\times \R$ let
$\dr(\xi,\mu)=\xi^3+\mu^2/\xi$ and 
$\ft_{x,y}[W(t)\phi](\xi,\mu,t)=e^{it\omega(\xi,\mu)}\widehat{\phi}(\xi,\mu)$.

$\bullet$
For $k,j \in \Z_+$ let $D_{k,j}=\{(\xi,\mu, \tau)\in \R^3: \xi \in
{I}_k, \tau-\omega(\xi,\mu)\in {I}_j\},\quad
D_{k,\leq j}=\cup_{l\leq j}D_{k,l}$. Similarly we define $D_{\leq
k,j}$ and $D_{\leq k,\leq j}$. 

$\bullet$
For $k\in \Z_+$, for $f(\xi,\mu,\tau)$ supported in
${I}_k\times\R^2$ (${I}_{\leq 0}\times \R^2$ if $k=0$), we define
\EQN{\norm{f}_{X_{k}}=&\sum_{j=0}^\infty
2^{j/{2}}\norm{\eta_j(\tau-w(\xi,\mu))\cdot
f(\xi,\mu,\tau)}_{L^{2}_{\xi,\mu,\tau}};\\
\norm{f}_{F_k}=&\sup\limits_{t_k\in \R}\norm{\ft[f\cdot
\eta_0(2^{k}(t-t_k))]}_{X_k};\\
\norm{f}_{N_k}=&\sup\limits_{t_k\in
\R}\norm{(\tau-\dr(\xi,\mu)+i2^{k})^{-1}\ft[f\cdot
\eta_0(2^{k}(t-t_k))]}_{X_k}.
}

$\bullet$ For $s\geq 0$ and $T\in (0,1]$, we define the spaces $F^s(T),N^s(T)$ by the norms
\EQN{
\norm{u}_{F^{s}(T)}^2=&\sum_{k=1}^{\infty}2^{2sk}\norm{P_k(u)}_{F_k(T)}^2+\norm{P_{\leq
0}(u)}_{F_0(T)}^2,\\
\norm{u}_{N^{s}(T)}^2=&\sum_{k=1}^{\infty}2^{2sk}\norm{P_k(u)}_{N_k(T)}^2+\norm{P_{\leq
0}(u)}_{N_0(T)}^2.
}

$\bullet$ For $s\in \R$ and $u\in
C([-T,T]:H^{\infty,0})$ we define
\begin{eqnarray*}
\norm{u}_{E^{s}(T)}^2=\norm{P_{\leq 0}(u(0))}_{L^2}^2+\sum_{k\geq
1}\sup_{t_k\in [-T,T]}2^{2sk}\norm{P_k(u(t_k))}_{L^2}^2.
\end{eqnarray*}

$\bullet$ The definition shows easily that if $k\in \Z_+$ and $f_k\in X_k$
then
\begin{eqnarray}\label{eq:pXk1}
\normo{\int_{\R}|f_k(\xi,\mu,\tau')|d\tau'}_{L_\xi^2}\les
\norm{f_k}_{X_k}.
\end{eqnarray}

$\bullet$ We have (see \cite{IKT}, see \cite{GuodgBO} for
a proof) that if $k\in \Z_+$, $l\in \Z_+$, and $f_k\in X_k$ then
\EQ{\label{eq:pXk2}
&\sum_{j=l+1}^\infty
2^{j/2}\normo{\eta_j(\tau-\omega(\xi,\mu))\cdot
\int_{\R}|f_k(\xi,\mu,\tau')|\cdot
2^{-l}(1+2^{-l}|\tau-\tau'|)^{-4}d\tau'}_{L^2}\\
&+2^{l/2}\normo{\eta_{\leq l}(\tau-\omega(\xi,\mu)) \int_{\R}
|f_k(\xi,\mu,\tau')|
2^{-l}(1+2^{-l}|\tau-\tau'|)^{-4}d\tau'}_{L^2}\les \norm{f_k}_{X_k}.
}
In particular, if $k,\ l\in \Z_+$, $t_0\in \R$, $f_k \in X_k$ and
$\gamma\in \Sch(\R)$, then
\begin{eqnarray}\label{eq:pXk3}
\norm{\ft[\gamma(2^l(t-t_0))\cdot \ft^{-1}(f_k)]}_{X_k}\les
\norm{f_k}_{X_k}.
\end{eqnarray}

\section{Bilinear estimates}

In this section, we prove the following bilinear estimates. This improves the bilinear estimates in \cite{GPW}.

\begin{proposition}\label{prop:bilinear}
(a) If $s\geq 0$, $T\in (0,1]$, and $u,v\in F^{s}(T)$ then
\EQ{
\norm{\partial_x(uv)}_{N^{s}(T)}\les&
\norm{u}_{F^{s}(T)}\norm{v}_{F^{0}(T)}+\norm{u}_{F^{0}(T)}\norm{v}_{F^{s}(T)}.
}

(b) If $s\geq 1/2$, $T\in (0,1]$, $u\in F^{s-1}(T)$ and $v\in F^{s}(T)$ then
\EQ{
\norm{\partial_x(uv)}_{N^{s-1}(T)}\les&
\norm{u}_{F^{s-1}(T)}\norm{v}_{F^{s}(T)}.
}
\end{proposition}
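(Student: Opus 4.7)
The plan is to follow the dyadic frequency and modulation decomposition strategy for short-time $X^{s,b}$-type spaces developed in \cite{IKT} and adapted to KP-I in \cite{GPW}, and to refine the block bilinear estimate at the high-high-to-low interaction to gain the extra half-derivative needed for $s>1/2$. Decompose $u=\sum_{k_1} P_{k_1}u$ and $v=\sum_{k_2} P_{k_2}v$, fix the output frequency $k$, and use the short-time cutoffs built into $F_k,N_k$ together with \eqref{eq:pXk3} to insert finer time windows at scale $2^{-k_{\max}}$. This reduces the proposition to a block bilinear estimate of the form
\[
2^k\,\normo{\mathbf{1}_{I_k}(\xi)(\tau-\dr(\xi,\mu)+i2^k)^{-1}(f_{k_1}\ast f_{k_2})}_{X_k}\les C(k,k_1,k_2)\,\norm{f_{k_1}}_{X_{k_1}}\norm{f_{k_2}}_{X_{k_2}}
\]
for inputs $f_{k_i}\in X_{k_i}$ with time support of length $\sim 2^{-k_{\max}}$, the constants $C(k,k_1,k_2)$ being summable after the weights $2^{sk-sk_1-sk_2}$.

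The key analytic input is the KP-I resonance identity
\[
\Omega=\dr(\xi_1,\mu_1)+\dr(\xi_2,\mu_2)-\dr(\xi_1+\xi_2,\mu_1+\mu_2)=-3\xi_1\xi_2(\xi_1+\xi_2)+\frac{(\mu_1\xi_2-\mu_2\xi_1)^2}{\xi_1\xi_2(\xi_1+\xi_2)},
\]
which, on the Fourier support of the convolution, forces $\max(j_1,j_2,j)\gtrsim\log_2|\Omega|$. The cases split in the standard way into high-low ($k_2\leq k_1-5$, $k\sim k_1$, or symmetric), high-high-to-high ($k_1\sim k_2\sim k$), high-high-to-low ($k_1\sim k_2\geq k+5$), and low-low. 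The high-low interaction produces the $\norm{u}_{F^s}\norm{v}_{F^0}$ piece and is controlled by an $L^2\cdot L^\infty$-type bound after extracting the modulation weight $2^{j/2}$ from $X_k$; the low-low case is trivial.

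The main obstacle is the high-high-to-low case at small $k$: the $\partial_x$ produces only a small prefactor $2^k$, and the transverse part of $\Omega$ can cancel the cubic one in the resonant region $\mu_1\xi_2\approx\mu_2\xi_1$, so the naive resonance gain is too weak. The refinement over \cite{GPW} (which needed $s\geq 1$) is to localize each input further in the $\mu$-direction on a scale chosen so that the transverse part of $\Omega$ is dominated by $|3\xi_1\xi_2(\xi_1+\xi_2)|\sim 2^{2k_1+k}$, restoring the full resonance; orthogonality of the resulting $\mu$-blocks in the output frequency then saves a square root of the number of blocks. Combined with the $2^k$ prefactor and the modulation gain $|\Omega|^{-1/2}$, this produces a bilinear constant of order $2^{-(k_1-k)/2}$, and the geometric sum $\sum_{k_1\geq k}2^{-(2s-1)(k_1-k)}$ converges precisely when $s>1/2$.

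Part (b) is proved by exactly the same scheme, placing $\partial_x$ on either factor; in the dangerous high-high-to-low regime it is placed on the $F^s$ factor so that the remaining derivative budget $2^{(s-1)k_1}$ matches the $F^{s-1}$-input, and the summability again reduces to $s\geq 1/2$. The bookkeeping of modulations $j_1,j_2,j$ in the remaining cases is routine once the $\mu$-block refinement is in place.
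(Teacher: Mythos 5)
Your general framework (dyadic decomposition, short-time windows at scale $2^{-k_{\max}}$, reduction to block bilinear estimates, the resonance identity) matches the paper's, but you have misidentified where the actual improvement over \cite{GPW} lies, and the one genuinely new step you propose is never carried out. The paper's refinement is entirely in the \emph{high-low to high} interaction: the new dyadic estimate \eqref{eq:hilow} gains a factor $(1+k_1)2^{-k_1/2}$ in the \emph{low} frequency $k_1$, and it is precisely this gain that allows the low-frequency factor to be measured in $F^{0}$ in part (a) and in $F^{s-1}$ with $s\geq 1/2$ in part (b). The high-high to high, high-high to low and low-low cases are taken verbatim from \cite{GPW}. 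By contrast, you locate the difficulty in the high-high to low case and propose to resolve it by an extra localization in the $\mu$-variable together with an orthogonality argument; no such estimate is proved (nor is it what the paper does), and the quantitative claims attached to it (a constant $2^{-(k_1-k)/2}$ and a geometric sum converging ``precisely when $s>1/2$'') are inconsistent with the statement you are proving, since part (a) is asserted for all $s\geq 0$ and part (b) for $s\geq 1/2$. The threshold $s>1/2$ of the main theorem does not come from the bilinear estimate at all, but from the energy estimates (Proposition \ref{prop:energy}), where the low-frequency factor must be placed in $F^{1/2+}(T)$.

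The missing substance is the proof of the reduced block inequality \eqref{eq:hilowprf2}. The paper obtains it with no new block estimates: it reuses Lemma \ref{lem:3zest} from \cite{GPW} and splits the sum over the output modulation $j_3$ at $j_3=k_3+2k_1$. For $j_3\geq k_3+2k_1$ the bilinear Strichartz-type bound \eqref{eq:3zestb1} (which carries the gain $2^{-k_3/2}2^{k_1/2}$) combined with the prefactor $2^{k_3}2^{-j_3/2}$ yields exactly $2^{-k_1/2}$; for $k_3\leq j_3\leq k_3+2k_1$ the elementary bound \eqref{eq:3zesta} gives $2^{-k_1/2}$ per term, and the $O(k_1)$ values of $j_3$ produce the logarithmic loss $(1+k_1)$. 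Your proposal contains no analogue of this splitting, and the ``$L^2\cdot L^\infty$-type bound'' you invoke for the high-low case does not by itself produce any decay in the low frequency $k_1$, so as written the argument does not close; it would at best reproduce the weaker estimate of \cite{GPW}.
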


The main ingredients are the following dyadic bilinear estimates. 

\begin{lemma}[\cite{GPW}]\label{lem:3zest}
(a) Assume $k_1,k_2,k_3\in \Z$, $j_1,j_2,j_3\in\Z_+$, and
$f_i:\R^3\to\R_+$ are functions supported in $D_{k_i,\leq
j_i}$, $i=1,2,3$.  Assume $f_1,f_2,f_3\in L^{2}(\R^3)$.  
Then
\EQ{\label{eq:3zesta}
\int_{\R^3}(f_1\ast f_2)\cdot f_3\, dxdydt\lesssim
2^{\frac{j_1+j_2+j_3}{2}}\min(2^{-\frac{k_1+k_2+k_3}{2}}, 2^{-\frac{j_{\max}}{2}})
\|f_1\|_{{2}}\|f_2\|_{{2}}\|f_3\|_{2}.
}

(b) Assume $k_1,k_2,k_3\in \Z$ with $k_2\geq 20$, $|k_2-k_3|\leq 5$
and $k_1\leq k_2-10$, $j_1,j_2,j_3\in\Z_+$, and $f_i:\R^3\to\R_+$
are $L^2$ functions supported in $D_{k_i,\leq j_i}$, $i=1,2,3$. Then
if $j_{max}\leq k_1+k_2+k_3-20$,
\begin{eqnarray}\label{eq:3zestb1}
\int_{\R^3}(f_1\ast f_2)\cdot f_3\, dxdydt\lesssim   2^{(j_1+j_2)/2}
2^{-k_3/2}2^{k_1/2}\|f_1\|_{L^{2}}\|f_2\|_{L^{2}}\|f_3\|_{L^2};
\end{eqnarray}
or else if $j_{max}\geq k_1+k_2+k_3-20$,
\begin{eqnarray}\label{eq:3zestb2}
\int_{\R^3}(f_1\ast f_2)\cdot f_3\, dxdydt\lesssim
2^{(j_1+j_2)/2}2^{j_{max}/4}
2^{-k_3}2^{k_1/4}\|f_1\|_{L^2}\|f_2\|_{L^2}\|f_3\|_{L^2}.
\end{eqnarray}
\end{lemma}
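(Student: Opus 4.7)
The plan is to run the standard $X^{s,b}$ dyadic argument based on the resonance function. After substituting $\theta_i = \tau_i - \omega(\xi_i,\mu_i)$, each $f_i$ becomes a function $g_i(\xi_i,\mu_i,\theta_i)$ supported in $\xi_i\in I_{k_i}$, $\theta_i\in I_{\leq j_i}$, and by Plancherel the trilinear form equals
\[
\int g_1(\xi_1,\mu_1,\theta_1)\, g_2(\xi_2,\mu_2,\theta_2)\, g_3(\xi_1+\xi_2,\mu_1+\mu_2,\theta_1+\theta_2+\Omega)\, d\xi_1 d\xi_2 d\mu_1 d\mu_2 d\theta_1 d\theta_2,
\]
where, after a short computation,
\[
\Omega = \omega(\xi_1,\mu_1)+\omega(\xi_2,\mu_2)-\omega(\xi_3,\mu_3) = -3\xi_1\xi_2\xi_3 + \frac{(\mu_1\xi_2-\mu_2\xi_1)^2}{\xi_1\xi_2\xi_3},
\]
with $\xi_3=\xi_1+\xi_2$ and $\mu_3=\mu_1+\mu_2$. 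The support of $g_3$ in $\theta_3$ forces $|\theta_1+\theta_2+\Omega|\les 2^{j_3}$, hence $|\Omega|\les 2^{j_{\max}}$, which drives every estimate below.

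For (a), the two alternatives in the minimum come from two orderings of Cauchy--Schwarz applied to this six-fold integral. Integrating first in $(\theta_1,\theta_2)$ contributes $2^{j_1/2}2^{j_2/2}$; one then either (i) retains the $\theta_3$-strip of length $2^{j_{\max}}$ and uses Cauchy--Schwarz there, yielding the $2^{-j_{\max}/2}$ alternative after rebalancing with the third factor $2^{j_3/2}$, or (ii) discards the modulation constraint and Cauchy--Schwarzes in $(\xi_1,\xi_2,\mu_1)$, using the $\xi$-supports $I_{k_i}$ and the fact that fixing $(\xi_1,\xi_2,\mu_3)$ determines $\mu_2=\mu_3-\mu_1$, so $\mu_1$ carries only a one-dimensional slice. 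This yields the $2^{-(k_1+k_2+k_3)/2}$ alternative.

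For (b), the improvement comes from the strongly asymmetric regime $k_1\leq k_2-10$, $|k_2-k_3|\leq 5$. Here $|\xi_1\xi_2\xi_3|\sim 2^{k_1+k_2+k_3}$ and $\Omega$ is the difference of two positive quantities of this size; when $j_{\max}\leq k_1+k_2+k_3-20$ the bound $|\Omega|\les 2^{j_{\max}}$ forces the near-cancellation $(\mu_1\xi_2-\mu_2\xi_1)^2\approx 3(\xi_1\xi_2\xi_3)^2$. A direct computation gives $\partial_{\mu_1}\Omega = 2(\mu_1\xi_2-\mu_2\xi_1)\xi_3/(\xi_1\xi_2\xi_3)$ at fixed $\mu_3$, which on the near-resonant set has size $\sim 2^{k_3}$. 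Hence the admissible $\mu_1$ lies in at most two intervals of total measure $\les 2^{j_{\max}-k_3}$. Substituting this refined $\mu_1$-volume bound in place of the naive one in the Cauchy--Schwarz scheme of (a) collapses the $2^{j_3/2}$ factor and produces \eqref{eq:3zestb1}. In the complementary regime $j_{\max}\geq k_1+k_2+k_3-20$, the graph description degenerates; I would instead pay $2^{j_{\max}/4}$ via interpolation to put the highest-modulation factor into $L^2$ and apply an $L^4$-type bilinear Strichartz at frequencies $\sim 2^{k_2}$ to the two remaining factors, yielding \eqref{eq:3zestb2}.

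The main obstacle is the geometric analysis of the near-zero set of $\Omega$ in (b). Unlike for KP-II or KdV, the KP-I resonance is not sign-definite in $(\mu_1,\mu_2)$, so $|\Omega|$ cannot be bounded below merely by $|\xi_1\xi_2\xi_3|$. One must verify that the two solution branches of the near-resonant equation are disjoint and parametrize as graphs in $\mu_1$ with the advertised derivative bound $\sim 2^{k_3}$. Once this implicit-function-type measure estimate is in place, the rest is routine Cauchy--Schwarz in six Fourier variables.
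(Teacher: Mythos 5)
First, a point of comparison: the paper does not prove this lemma at all --- it is imported verbatim from \cite{GPW} (whose proof in turn follows Ionescu--Kenig--Tataru), so your attempt can only be measured against the cited source. Your general framework is the right one: passing to the modulation variables $\theta_i=\tau_i-\omega(\xi_i,\mu_i)$, the resonance identity $\Omega=-3\xi_1\xi_2\xi_3+(\mu_1\xi_2-\mu_2\xi_1)^2/(\xi_1\xi_2\xi_3)$, and fiber-measure/Cauchy--Schwarz estimates driven by the constraint $\theta_3=\theta_1+\theta_2+\Omega$. Your treatment of \eqref{eq:3zestb1} is essentially correct and complete in outline: when $j_{\max}\le k_1+k_2+k_3-20$ the turning point of the parabola $\mu_1\mapsto\Omega$ (where $\Omega=-3\xi_1\xi_2\xi_3$, of size $2^{k_1+k_2+k_3}\gg 2^{j_{\max}}$) is excluded from the near-resonant set, so that set really is two graphs with $|\partial_{\mu_1}\Omega|\sim 2^{k_3}$ and $\mu_1$-measure $\les 2^{\max(j_1,j_2)-k_3}$; combined with the $\tau_1$-measure $\min(2^{j_1},2^{j_2})$ and the $\xi_1$-measure $2^{k_1}$ this yields \eqref{eq:3zestb1}, and the implicit-function concern you flag at the end is resolved exactly as you suspect.

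There are, however, two genuine gaps. (1) In part (a), the claimed derivation of the $2^{-(k_1+k_2+k_3)/2}$ alternative --- ``discard the modulation constraint and Cauchy--Schwarz in $(\xi_1,\xi_2,\mu_1)$ using the $\xi$-supports'' --- cannot work. The supports $D_{k_i,\le j_i}$ are unbounded in the transverse frequencies $\mu_i$, so once the modulation constraint is dropped the $\mu_1$-integration runs over a set of infinite measure and Cauchy--Schwarz returns nothing; and the $\xi$-supports alone could only ever produce a positive power $2^{+k_{\min}/2}$, never a factor decaying in all three $k_i$. The gain $2^{-(k_1+k_2+k_3)/2}$ comes precisely from the modulation constraint via the quadratic dependence of $\Omega$ on $\mu_1$ (leading coefficient $\xi_3/(\xi_1\xi_2)$), which confines $\mu_1$ to a set of measure $\les\bigl(2^{j}\,|\xi_1\xi_2/\xi_3|\bigr)^{1/2}$ --- the very mechanism you reserve for part (b) must already be used here, and the bookkeeping of which $j_i$ feeds each measure estimate is what produces the symmetric factor $2^{(j_1+j_2+j_3)/2}$; as written, your scheme produces quarter powers of some of the $2^{j_i}$ rather than the stated half powers. (2) For \eqref{eq:3zestb2} you offer only a declaration of intent (``pay $2^{j_{\max}/4}$ via interpolation \dots\ apply an $L^4$-type bilinear Strichartz at frequencies $\sim 2^{k_2}$''). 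No mechanism is given for the crucial factor $2^{-k_3}2^{k_1/4}$, and a bilinear estimate involving only the two comparable frequencies $\sim 2^{k_2}\sim 2^{k_3}$ cannot detect $k_1$ at all; in the cited proof this case is again a fiber-measure computation in which the $\mu_1$-localization degenerates from $2^{j_{\max}-k_3}$ to $\bigl(2^{j_{\max}}|\xi_1\xi_2/\xi_3|\bigr)^{1/2}$, which is the source of the quarter powers. In short: correct road map, a correct proof of \eqref{eq:3zestb1}, but part (a) and \eqref{eq:3zestb2} are not established.
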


\begin{proposition}[Dyadic bilinear estimates]
Assume $u_{k_1}\in F_{k_1}, v_{k_2}\in F_{k_2}$.  Then

(a) high-low to high: If $k_3\geq 20$, $|k_2-k_3|\leq 5$, $0\leq k_1\leq k_2-10$, then
\EQ{\label{eq:hilow}
\norm{P_{k_3}\partial_x(u_{k_1}v_{k_2})}_{N_{k_3}}\les (1+k_1) 2^{-k_1/2}
\norm{u_{k_1}}_{F_{k_1}}\norm{v_{k_2}}_{F_{k_2}}.
}

(b) high-high to high: If $k_3\geq 20$, $|k_3-k_2|\leq 5$, $|k_1-k_2|\leq 5$, then
\EQ{
\norm{P_{k_3}\partial_x(u_{k_1}v_{k_2})}_{N_{k_3}}\les k_3 2^{-k_3/2}
\norm{u_{k_1}}_{F_{k_1}}\norm{v_{k_2}}_{F_{k_2}}.
}

(c) high-high to low:  
If $k_2\geq 20$, $|k_1-k_2|\leq 5$ and $ 0\leq k_3\leq k_1-10$, then
\EQ{
\norm{P_{k_3}\partial_x(u_{k_1}v_{k_2})}_{N_{k_3}}\les k_2 2^{-k_3/2}
\norm{u_{k_1}}_{F_{k_1}}\norm{v_{k_2}}_{F_{k_2}}.
}

(d) low-low to low:  If $0\leq k_1,k_2,k_3\leq 200$, then
\EQ{
\norm{P_{k_3}\partial_x(u_{k_1}v_{k_2})}_{N_{k_3}}\les 
\norm{u_{k_1}}_{F_{k_1}}\norm{v_{k_2}}_{F_{k_2}}.
}   
\end{proposition}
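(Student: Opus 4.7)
My plan is to prove each of the four dyadic estimates by reducing to the trilinear dyadic $L^2$ bound of Lemma \ref{lem:3zest}, following the scheme of \cite{GPW,IKT}. Fixing $t_{k_3}\in\R$, I must control
\[
\norm{(\tau-\omega(\xi,\mu)+i2^{k_3})^{-1}\,i\xi\,\chi_{k_3}(\xi)\,\ft[u_{k_1}v_{k_2}\cdot\eta_0(2^{k_3}(t-t_{k_3}))]}_{X_{k_3}}.
\]
In cases (a), (b), (d), where $k_3\ges k_1,k_2$, I insert the identities $\eta_0(2^{k_1}(t-t_{k_3}))\eta_0(2^{k_2}(t-t_{k_3}))\equiv 1$ on the support of $\eta_0(2^{k_3}(t-t_{k_3}))$ and apply \eqref{eq:pXk3} to replace $u_{k_1},v_{k_2}$ by representatives $\wh u_{k_1},\wh v_{k_2}$ with $X_{k_i}$ norm bounded by $\norm{u_{k_i}}_{F_{k_i}}$. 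In case (c), where $k_3\le k_1-10$, I reverse the move: I partition the $F_{k_i}$-cutoffs at scale $2^{-k_2}$ into $O(2^{k_2-k_3})$ pieces of length $2^{-k_3}$, paying a factor $2^{(k_2-k_3)/2}$ through $\ell^2$-summation that must be absorbed at the end.

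After this reduction, expanding each factor in modulation pieces $f_{k_i,j_i}=\eta_{j_i}(\tau-\omega)\wh f_{k_i}$ turns the $X_{k_3}$ norm into a triple sum over $(j_1,j_2,j_3)$ of trilinear pairings $\int_{\R^3}(f_{k_1,j_1}*f_{k_2,j_2})\,\overline{h_{k_3,j_3}}\,d\xi\,d\mu\,d\tau$ with $\norm{h_{k_3,j_3}}_{L^2}=1$, each weighted by $2^{j_3/2+k_3-\max(j_3,k_3)}$ arising from the $X_{k_3}$ weight, the $\partial_x$ symbol, and the resolvent. Case (d) is a direct application of \eqref{eq:3zesta} with all $k_i=O(1)$. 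The other three cases combine \eqref{eq:3zestb1}--\eqref{eq:3zestb2} with the KP-I resonance identity
\[
\Omega(\xi_1,\mu_1,\xi_2,\mu_2)=-3\xi_1\xi_2(\xi_1+\xi_2)+\frac{(\mu_1\xi_2-\mu_2\xi_1)^2}{\xi_1\xi_2(\xi_1+\xi_2)},
\]
which enforces $\max(j_1,j_2,j_3)\ges\log_2|\Omega|$ on the support of the integrand. In the non-degenerate regime where $|\Omega|\ges 2^{k_1+k_2+k_3}$ in cases (a), (b) and $|\Omega|\ges 2^{2k_2+k_3}$ in case (c), I am pushed into the high-modulation branch \eqref{eq:3zestb2}, and the logarithmic prefactors $(1+k_1)$, $k_3$, $k_2$ arise as the length of the modulation line along which the resonance saturates.

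The principal obstacle is case (c): I must extract the low-output gain $2^{-k_3/2}$, absorb the partitioning loss $2^{(k_2-k_3)/2}$, and still pay only the logarithmic factor $k_2$. The dispersive piece of $\Omega$ has size $2^{2k_2+k_3}$ because $\xi_1,\xi_2$ have opposite signs and $|\xi_1+\xi_2|\sim 2^{k_3}\ll 2^{k_2}$; substituting this resonance bound into \eqref{eq:3zestb2} produces precisely the $2^{-k_3}$ needed to absorb both the partitioning loss and deliver the claimed $2^{-k_3/2}$. The delicate step is the degenerate subset where the $\mu$-transverse term $(\mu_1\xi_2-\mu_2\xi_1)^2/(\xi_1\xi_2(\xi_1+\xi_2))$ cancels the dispersive one and $|\Omega|$ can be arbitrarily small; on this set the resonance lower bound is unavailable and I must argue directly from \eqref{eq:3zesta} after a careful $\mu$-slicing that exploits the smallness of the set on which the cancellation occurs. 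It is exactly the sharper accounting of this degenerate slice that yields the improvement over the corresponding estimates in \cite{GPW}.
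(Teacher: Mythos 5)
Your reduction to dyadic trilinear pairings is the right frame, but the engine you propose to drive it with is not the paper's and has a genuine gap. You want to use the resonance identity together with $\max(j_1,j_2,j_3)\ges\log_2|\Omega|$ and lower bounds such as $|\Omega|\ges 2^{k_1+k_2+k_3}$ to force yourself into the high-modulation branch \eqref{eq:3zestb2}. For KP-I this is exactly the step that fails: in the identity you quote the two terms of $\Omega$ always have opposite signs (whatever the sign of $\xi_1\xi_2(\xi_1+\xi_2)$), so $\Omega$ vanishes on a codimension-one set inside \emph{every} interaction regime (a)--(c), not only in case (c). There is no ``non-degenerate regime'' covering the support, the degenerate set is not small in a way that a $\mu$-slicing can exploit without substantial new input, and this absence of a resonance lower bound is precisely why Picard iteration fails for KP-I (cf.\ \cite{MoSaTz1}) and why short-time norms are used at all. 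The paper's proof never invokes $\Omega$: the dichotomy $j_{\max}\leq k_1+k_2+k_3-20$ versus $j_{\max}\geq k_1+k_2+k_3-20$ in Lemma \ref{lem:3zest}(b) is an exhaustive case split, not a resonance condition, and \eqref{eq:3zesta} is unconditional.

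Concretely, for part (a) the paper reduces matters to \eqref{eq:hilowprf2} and splits the output-modulation sum at $j_3=k_3+2k_1$: for $j_3\geq k_3+2k_1$ it applies \eqref{eq:3zestb1} and sums a geometric series in $j_3$, while for $k_3\leq j_3\leq k_3+2k_1$ it applies \eqref{eq:3zesta} with the factor $2^{-(k_1+k_2+k_3)/2}\sim 2^{-k_3}2^{-k_1/2}$, obtaining $O(k_1)$ equal dyadic contributions; that is the sole source of the prefactor $(1+k_1)$, not ``the length of the modulation line along which the resonance saturates''. You have also misplaced the novelty: the improvement over \cite{GPW} lies entirely in the high-low case (a), while parts (b), (c) and (d) are quoted from \cite{GPW} without change. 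The delicate treatment of case (c) you outline --- including the unexplained $2^{(k_2-k_3)/2}$ rather than the usual $2^{k_2-k_3}$ loss from repartitioning the time scale, and the unspecified handling of the degenerate slice --- is therefore both unnecessary for the paper's purpose and, as written, not a proof.
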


\begin{proof}
(a) By the definitions of $N_k$ and \eqref{eq:pXk2}, we obtain that the left-hand side of \eqref{eq:hilow} is dominated by
\begin{eqnarray}\label{eq:hilowprf1}
&&C\sup_{t_k\in \R}\norm{(\tau-\omega(\xi,\mu)+i2^{k_3})^{-1}\cdot
2^{k_3}1_{I_{k_3}}(\xi) \nonumber\\
&&\quad \cdot \ft[u_{k_1}\eta_0(2^{k_3-2}(t-t_k))]*\ft[v_{k_2}
\eta_0(2^{ k_3-2}(t-t_k))]}_{X_k}.
\end{eqnarray}
To prove part (a), it suffices to prove that if
$j_i\geq  k_3$ and $f_{k_i,j_i}: \R^3\rightarrow \R_+$ are supported
in ${D}_{k_i,\leq j_i}$ ($D_{\leq 0,\leq j_1}$ if $k_1=0$) for
$i=1,2$, then
\EQ{\label{eq:hilowprf2}
&2^{k_3}\sum_{j_3\geq k_3}2^{-j_3/2}\norm{1_{{D}_{k_3,\leq j_3}}\cdot
(f_{k_1,j_1}*f_{k_2,j_2})}_{L^2}\\
&\les  (1+k_1)2^{-{k_1/2}}
2^{(j_1+j_2)/2}\norm{f_{k_1,j_1}}_{L^2}\norm{f_{k_2,j_2}}_{L^2}.
}

Indeed, let $f_{k_1}=\ft[u_{k_1}\eta_0(2^{k_3-2}(t-t_k))]$ and
$f_{k_2}=\ft[v_{k_2}\eta_0(2^{k_3-2}(t-t_k))]$. Then from the
definition of $X_k$ we get that \eqref{eq:hilowprf1} is dominated by
\begin{eqnarray}\label{eq:hilowprf3}
\sup_{t_k\in
\R}2^{k_3}\sum_{j_3=0}^{\infty}2^{j_3/2}\sum_{j_1,j_2\geq k_3}\norm{(2^{j_3}+i2^{k_3})^{-1}1_{{D}_{k_3,j_3}}\cdot
f_{k_1,j_1}*f_{k_2,j_2}}_{L^2},
\end{eqnarray}
where we set
$f_{k_i,j_i}=f_{k_i}(\xi,\mu,\tau)\eta_{j_i}(\tau-\omega(\xi,\mu))$
for $j_i>k_3$ and the remaining part
$f_{k_i,k_3}=f_{k_i}(\xi,\mu,\tau)\eta_{\leq
k_3}(\tau-\omega(\xi,\mu))$, $i=1,2$. For the summation on the terms
$j_3< k_3$ in \eqref{eq:hilowprf3}, we get from the fact
$1_{D_{k_3,j_3}}\leq 1_{{D}_{k_3,\leq j_3}}$ that
\begin{eqnarray}
&&\sup_{t_k\in \R}2^{k_3}\sum_{j_3<k_3}2^{j_3/2}\sum_{j_1,j_2\geq
k_3}\norm{(2^{j_3}+i2^{k_3})^{-1}1_{{D}_{k_3,j_3}}\cdot
f_{k_1,j_1}*f_{k_2,j_2}}_{L^2}\nonumber\\
&&\les \sup_{t_k\in \R}2^{k_3}\sum_{j_1,j_2\geq
k_3}2^{-k_3/2}\norm{1_{{D}_{k_3,\leq k_3}}\cdot
f_{k_1,j_1}*f_{k_2,j_2}}_{L^2}.
\end{eqnarray}
From the fact that $f_{k_i,j_i}$ is supported in ${D}_{k_i,\leq j_i}$
for $i=1,2$ and using \eqref{eq:hilowprf2}, then we get that
\begin{eqnarray*}
&&\sup_{t_k\in \R}2^{k_3}\sum_{j_1,j_2\geq
k_3}2^{-k_3/2}\norm{1_{{D}_{k_3,\leq k_3}}\cdot
f_{k_1,j_1}*f_{k_2,j_2}}_{L^2}\\
 &&\les  (1+k_1)2^{-{k_1/2}}\sup_{t_k\in \R}
\sum_{j_1,j_2\geq
k_3}2^{j_1/2}\norm{f_{k_1,j_1}}_{L^2}2^{j_2/2}\norm{f_{k_2,j_2}}_{L^2}.
\end{eqnarray*}
Thus from the definition and using \eqref{eq:pXk2} and
\eqref{eq:pXk3} we obtain \eqref{eq:hilow}, as desired.

It remains to prove \eqref{eq:hilowprf2}. We assume first $k_1\geq 1$. 
We have
\EQN{
&2^{k_3}\sum_{j_3\geq k_3}2^{-j_3/2}\norm{1_{{D}_{k_3,\leq j_3}}\cdot
(f_{k_1,j_1}*f_{k_2,j_2})}_{L^2}\\
\les& \brk{\sum_{j_3\geq k_3+2k_1}+\sum_{j_3=k_3}^{k_3+2k_1}}2^{k_3}2^{-j_3/2}\norm{1_{{D}_{k_3,\leq j_3}}\cdot
(f_{k_1,j_1}*f_{k_2,j_2})}_{L^2}\\
:=& I+II.
}
For the term $I$, by \eqref{eq:3zestb1} we get
\EQN{
I\les &\sum_{j_3\geq k_3+2k_1}2^{k_3}2^{-j_3/2}\norm{1_{{D}_{k_3,\leq
j_3}}\cdot
(f_{k_1,j_1}*f_{k_2,j_2})}_{L^2}\\
\les& \sum_{j_3\geq k_3+2k_1}2^{k_3}2^{-j_3/2}2^{(j_1+j_2)/2}2^{-k_3/2}2^{k_1/2}\prod_{i=1}^2\norm{f_{k_i,j_i}}_{L^2}\\
\les& 2^{-k_1/2}\prod_{i=1}^2(2^{j_i/2}\norm{f_{k_i,j_i}}_{L^2}).
}
For the term $II$, by \eqref{eq:3zesta} we get
\EQN{
II \les & \sum_{j_3=k_3}^{k_3+2k_1} 2^{k_3}2^{-j_3/2}\norm{1_{{D}_{k_3,\leq
j_3}}\cdot (f_{k_1,j_1}*f_{k_2,j_2})}_{L^2}\\
\les& \sum_{j_3=k_3}^{k_3+2k_1} 2^{k_3}2^{-j_3/2}2^{(j_1+j_2+j_3)/2}2^{-k_3}2^{-k_1/2}\prod_{i=1}^2\norm{f_{k_i,j_i}}_{L^2}\\
\les & k_1 2^{-k_1/2}\prod_{i=1}^2(2^{j_i/2}\norm{f_{k_i,j_i}}_{L^2}).
}

Now we assume $k_1=0$. We apply homogeneous dyadic decomposition to $f_{0, j_1}$ on $\xi$, namely $f_{0,j_1}=\sum_{k_1=-\infty}^0 1_{I_{k_1}}f_{0,j_1}$.  This can be handled in the same way as the term $I$. 
Therefore, we complete the proof for Part (a).

Parts (b) (c) (d) are proved in \cite{GPW}.
\end{proof}

\section{Proof of Theorem \ref{thmmain}}

In this section, we prove Theorem \ref{thmmain} following closely the arguments in \cite{GPW}. First we have the linear estimates.

\begin{proposition}[\cite{GPW}]\label{pFstoHs}
Let $s\geq 0$, $T\in (0,1]$, and $u\in F^{s}(T)$, then
\begin{equation}
\sup_{t\in [-T,T]}\norm{u(t)}_{{H}^{s,0}}\les\ \norm{u}_{F^{s}(T)}.
\end{equation}
\end{proposition}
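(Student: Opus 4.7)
My plan is to reduce everything to a single-frequency embedding and then choose the time-localization parameter in the definition of $F_k$ cleverly.

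First, since both norms are diagonal with the same weights $2^{sk}$ over the Littlewood--Paley decomposition, the proposition follows once we establish, for every $k\in\Z_+$,
\EQN{
\sup_{t\in[-T,T]}\norm{P_k u(t)}_{L^2_{x,y}}\les \norm{P_k u}_{F_k(T)}.
}
Indeed, squaring and summing over $k\geq 1$ (plus handling the $P_{\leq 0}$ piece analogously via $F_0$) gives the result, since for each fixed $t$ we have Plancherel's identity $\norm{u(t)}_{H^{s,0}}^2 \sim \sum_k 2^{2sk}\norm{P_ku(t)}_{L^2}^2 + \norm{P_{\leq 0}u(t)}_{L^2}^2$.

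Second, I would prove the single-frequency embedding: if $g\in X_k$ is supported in $I_k\times \R^2$ (respectively $I_{\leq 0}\times \R^2$ when $k=0$), then
\EQN{
\sup_{s\in\R}\norm{\ft^{-1}g(s,\cdot)}_{L^2_{x,y}}\les \norm{g}_{X_k}.
}
By Plancherel in $(x,y)$ and the substitution $\tau=\omega(\xi,\mu)+\tau'$,
\EQN{
\norm{\ft^{-1}g(s,\cdot)}_{L^2_{x,y}}=\normo{\int_\R e^{is\tau'}g(\xi,\mu,\omega(\xi,\mu)+\tau')\,d\tau'}_{L^2_{\xi,\mu}}.
}
Dyadically decompose in $\tau'$ using $\eta_j(\tau')$, apply Cauchy--Schwarz in $\tau'$ on each piece to gain $2^{j/2}$, exchange the $L^2_{\xi,\mu}$ and $\ell^1_j$ norms by Minkowski, and recognize the resulting expression as $\norm{g}_{X_k}$. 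This is uniform in $s$ since the bound does not see the phase $e^{is\tau'}$.

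Third, to finish I fix $t_0\in[-T,T]$ and choose the free parameter $t_k=t_0$ in the definition of $F_k$. Let $\tilde u$ be an extension of $u$ from $[-T,T]$ to $\R$; then $\eta_0(2^k(t_0-t_k))=\eta_0(0)=1$, so
\EQN{
P_k u(t_0)=\bigl[P_k\tilde u(t)\cdot \eta_0(2^k(t-t_k))\bigr]\big|_{t=t_0}=\ft^{-1}\!\bigl[\ft[P_k\tilde u\cdot \eta_0(2^k(\cdot-t_k))]\bigr](t_0,\cdot).
}
Applying the embedding from the second step to $g=\ft[P_k\tilde u\cdot \eta_0(2^k(\cdot-t_k))]$ (which is supported in the right $\xi$-annulus because $P_k$ commutes with the time multiplier) gives $\norm{P_k u(t_0)}_{L^2}\les \norm{g}_{X_k}\leq \norm{P_k u}_{F_k(T)}$ after taking the infimum over extensions. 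Taking the supremum over $t_0\in[-T,T]$ completes the dyadic estimate.

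There is no real obstacle here; the only subtlety is the standard trick of tying the time-cutoff center $t_k$ to the evaluation time $t_0$ so that the cutoff is identically $1$ at the point of interest, converting an $L^\infty_t L^2_{x,y}$ bound into the $X_k$ embedding whose proof is a one-line Cauchy--Schwarz in the modulation variable.
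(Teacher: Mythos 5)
Your argument is correct and is essentially the standard proof of this embedding (the paper itself states the proposition without proof, deferring to \cite{GPW}): reduce to a single dyadic block, tie the cutoff center $t_k$ in the $F_k$-norm to the evaluation time $t_0$ so the cutoff equals $1$ there, and apply the $X_k$-to-$L^\infty_t L^2_{x,y}$ bound. Note that your second step is exactly the inequality \eqref{eq:pXk1} already recorded in the paper's notation section, so you could cite it directly rather than re-deriving it.
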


\begin{proposition}[\cite{GPW}]\label{prop:linear}
Assume $T\in (0,1]$, $u,v\in C([-T,T]:H^{\infty,0})$ and
\begin{eqnarray}\label{eq:lKPI}
u_t+\partial_x^3 u-\partial_x^{-1}\partial_y^2 u=v \mbox{ on } \R^2
\times (-T,T).
\end{eqnarray}
Then for any $s\geq 0$,
\begin{equation}\label{eq:linear}
\norm{u}_{F^{s}(T)}\les \ \norm{u}_{E^{s}(T)}+\norm{v}_{N^{s}(T)}.
\end{equation}
\end{proposition}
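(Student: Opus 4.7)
The plan is to follow the short-time $X^{s,b}$ strategy of Ionescu--Kenig--Tataru that underlies the $F_k$, $N_k$ spaces: reduce to a dyadic frequency block, build a Duhamel extension on a time window of length $2^{-k}$, and match the space-time Fourier transform of each piece against the symbols appearing in the $F_k$ and $N_k$ norms.

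Since $P_k$ commutes with $\partial_t+\partial_x^3-\partial_x^{-1}\partial_y^2$ and since $F^s(T)$, $E^s(T)$, $N^s(T)$ are $\ell^2$-sums in $k$ with the same weight $2^{2sk}$, I would first reduce \eqref{eq:linear} to the frequency-localized estimate
\begin{equation*}
\norm{P_k u}_{F_k(T)}\les\sup_{t_k\in[-T,T]}\norm{P_k u(t_k)}_{L^2}+\norm{P_k v}_{N_k(T)},\qquad k\geq 0.
\end{equation*}
Fixing $k$ and $t_k\in[-T,T]$, I would pick an extension $\tilde v$ of $P_k v$ to $\R_t$ with $\norm{\tilde v}_{N_k}\les\norm{P_k v}_{N_k(T)}$, set
\begin{equation*}
\tilde u(t)=W(t-t_k)P_k u(t_k)+\int_{t_k}^{t}W(t-s)\tilde v(s)\,ds,
\end{equation*}
which coincides with $P_k u$ on $[-T,T]$ by uniqueness for \eqref{eq:lKPI}, and then bound $\norm{\ft[\eta_0(2^k(t-t_k))\tilde u]}_{X_k}$.

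The free piece has explicit space-time Fourier transform $2^{-k}\wh{\eta_0}(2^{-k}(\tau-\omega))e^{-it_k\tau}\wh{P_k u(t_k)}(\xi,\mu)$, whose $X_k$ norm is $\les\norm{P_k u(t_k)}_{L^2}$: after the change of variables $\sigma=\tau-\omega(\xi,\mu)$, the weight $2^{j/2}$ in the $X_k$ sum gives $\les 2^{j/2}$ on the modulation range $j\leq k$ and decays rapidly beyond (thanks to the Schwartz decay of $\wh{\eta_0}$), balancing the $2^{-k}$ prefactor. For the Duhamel piece, Fourier-transforming in $t$ produces the multiplier $(e^{it(\tau-\omega)}-e^{it_k(\tau-\omega)})/[i(\tau-\omega)]$ acting on $\widehat{\tilde v}$; using the decomposition
\begin{equation*}
\frac{1}{\tau-\omega}=\frac{1}{\tau-\omega+i2^k}+\frac{i2^k}{(\tau-\omega)(\tau-\omega+i2^k)},
\end{equation*}
the first summand is exactly the symbol from the $N_k$ norm, and the second, effectively supported at modulation $\les 2^k$, is absorbed by the time cutoff $\eta_0(2^k(t-t_k))$ via \eqref{eq:pXk2}. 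The reinsertion inequality \eqref{eq:pXk3} then lets us shift the anchor $t_k$ freely when matching against the suprema in the definitions of $F_k$ and $N_k$.

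The principal technical difficulty is the bookkeeping around the sharp Duhamel cutoff $1_{[t_k,t]}$, which yields a non-smooth Fourier multiplier; one must split into the regimes $|\tau-\omega|\ges 2^k$ (reading off the $N_k$ symbol) and $|\tau-\omega|\les 2^k$ (where the free-solution decay absorbs the remainder). A secondary nuisance is synchronizing the Duhamel anchor with the supremum in the $F_k$, $N_k$ definitions, but this is harmless by the translation invariance of the $X_k$ norm in $t$.
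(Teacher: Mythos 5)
The paper does not prove this proposition: it is imported verbatim from \cite{GPW}, and the argument there (following \cite{IKT}) is exactly the one you sketch --- reduction to a single dyadic block, a Duhamel extension anchored on a time window of length $2^{-k}$, the explicit Fourier computation of the free and Duhamel pieces, and the splitting of $(\tau-\omega)^{-1}$ into the $N_k$ symbol plus a low-modulation remainder controlled via \eqref{eq:pXk2}--\eqref{eq:pXk3}. Your proposal is correct and takes essentially the same route as the cited proof.
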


The rest of proof of Theorem \ref{thmmain} are the same as in \cite{GPW} if we have the following two propositions

\begin{proposition}\label{prop:energy}
Assume that $T\in (0,1]$ and $u\in C([-T,T]:H^{\infty,0})$ is a
solution to \eqref{eq:kpI} on $\R\times(-T,T)$. Then for $s>1/2$
we have
\begin{eqnarray}\label{eq:energy}
\norm{u}_{E^s(T)}^2\les
\norm{u_0}_{H^{s,0}}^2+\norm{u}_{F^{1/2+}(T)}\norm{u}_{F^{s}(T)}^2.
\end{eqnarray}
\end{proposition}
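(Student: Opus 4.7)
The plan is to follow the $X^{s,b}$-plus-energy method of Ionescu--Kenig--Tataru \cite{IKT} as adapted in \cite{GPW}, feeding in the sharpened dyadic bilinear ingredients of Proposition \ref{prop:bilinear}. First I would apply $P_k$ to \eqref{eq:kpI}, test against $P_k u$, and use that $\partial_x^3$ and $\partial_x^{-1}\partial_y^2$ are $L^2$-skew-adjoint to obtain, for every $t_k\in[-T,T]$,
\EQN{
\|P_k u(t_k)\|_{L^2}^2-\|P_k u(0)\|_{L^2}^2=-\int_0^{t_k}\!\!\int_{\R^2}P_k u\cdot P_k\partial_x(u^2)\,dx\,dy\,dt'.
}
Multiplying by $2^{2sk}$, taking the supremum over $t_k$, and summing in $k$, the estimate \eqref{eq:energy} reduces to the trilinear bound
\EQN{
\sum_{k\geq 1}2^{2sk}\sup_{|t_k|\leq T}\Big|\int_0^{t_k}\!\!\int_{\R^2}P_k u\cdot P_k\partial_x(u^2)\,dx\,dy\,dt'\Big|\les\|u\|_{F^{1/2+}(T)}\,\|u\|_{F^s(T)}^2.
}

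Next I would expand $u^2=\sum_{k_1,k_2}P_{k_1}u\cdot P_{k_2}u$ and separate the triples $(k,k_1,k_2)$ into the four frequency configurations (a)--(d) of the dyadic bilinear proposition in Section 2. To reconcile the macroscopic time integration with the scale-$2^{-k}$ time localization built into $F_k$ and $N_k$, I would decompose $\mathbf{1}_{[0,t_k]}$ into $O(1+T2^k)$ smooth bumps of width $2^{-k}$, and on each bump apply $F_k$--$N_k$ duality
\EQN{
\Big|\!\int\!P_k u\cdot P_k\partial_x(P_{k_1}u\cdot P_{k_2}u)\,\gamma(2^k(t'-t_m))\,dx\,dy\,dt'\Big|\les\|P_k u\|_{F_k}\|P_k\partial_x(P_{k_1}u\cdot P_{k_2}u)\|_{N_k}.
}
The right-hand side is then controlled directly by Proposition \ref{prop:bilinear}, and the $O(T2^k)$ factor from the bump count is to be absorbed by the $2^{-k}$ gains inside the bilinear estimates, exactly as in \cite{IKT,GPW}.

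The third step is the dyadic summation, done case by case. Writing $A_k:=\|P_k u\|_{F_k(T)}$: case (a) uses the gain $(1+k_1)\,2^{-k_1/2}$, an $\ell^2$-Cauchy--Schwarz on the paired high indices $k\sim k_2$, and a weighted Cauchy--Schwarz in $k_1$ that yields $\sum_{k_1}(1+k_1)\,2^{-k_1/2}A_{k_1}\les\|u\|_{F^\delta(T)}\leq\|u\|_{F^{1/2+}(T)}$ for small $\delta>0$; case (b) uses $\sup_k k\,2^{-k/2}A_k\les\|u\|_{F^{1/2+}(T)}$ together with $\sum_k(2^{sk}A_k)^2\leq\|u\|_{F^s(T)}^2$; case (c) is where the hypothesis $s>1/2$ becomes binding, since one needs $\sum_{k\leq k_1}2^{(2s-1/2)k}A_k\les 2^{(s-1/2)k_1}\|u\|_{F^{1/2+}(T)}$, which requires $2s-1>0$, after which the remaining $k_1$-sum closes using the logarithmic loss $k_1$ against $2^{-(1+\delta)k_1}$; case (d) has only finitely many terms and is immediate.

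The main technical obstacle is the bookkeeping in the second step: one must verify that the $O(T2^k)$ loss from the time-bump count is genuinely absorbed by the $2^{-k}$ factors present on the right-hand sides of Proposition \ref{prop:bilinear}, via the resolvent weight $(\tau-\omega+i2^k)^{-1}$ in the definition of $N_k$ and the extension property \eqref{eq:pXk3}. This step is essentially the same as in \cite{IKT,GPW}; once it is in place, the improvement from $s\geq 1$ down to $s>1/2$ is driven entirely by substituting the sharper Proposition \ref{prop:bilinear} (in particular the $(1+k_1)\,2^{-k_1/2}$ gain in case (a)) in place of the earlier bilinear estimates.
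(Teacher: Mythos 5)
Your first step (the $L^2$ energy identity for $P_k u$) matches the paper's \eqref{eq:L2esti}--\eqref{eq:energyeq}, but your second step diverges from the paper in a way that breaks the argument. You propose to chop $[0,t_k]$ into $O(T2^k)$ bumps of width $2^{-k}$, apply $F_k$--$N_k$ duality on each bump, and invoke Proposition \ref{prop:bilinear}, asserting that the $O(T2^k)$ bump count ``is to be absorbed by the $2^{-k}$ gains inside the bilinear estimates.'' No such gain is available: in the high-low case the right-hand side of \eqref{eq:hilow} carries $(1+k_1)2^{-k_1/2}$, a gain only in the \emph{low} frequency $k_1$; in the high-high-to-high case the gain is $k_3 2^{-k_3/2}$, only half the power of $2^{k}$ you need; and in the high-high-to-low case the gain $k_2 2^{-k_3/2}$ is again only half of the $2^{k_3}$ bump count at the output frequency. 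So after summing the bumps you are left with an uncompensated factor of order $2^{k/2}$ to $2^{k}$, and the $k$-summation does not close at the claimed regularity (indeed, if duality plus the short-time bilinear estimate sufficed to control $E^s$, the separate energy estimate would be unnecessary in the whole Ionescu--Kenig--Tataru scheme). This is precisely the loss that the energy method is designed to circumvent.

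The paper instead uses the dedicated trilinear estimates of Lemma \ref{lem:3linear}, proved in \cite{GPW} directly for the time integral over $[0,T]$ with no bump-count loss, and -- crucially -- the commutator estimate \eqref{eq:3linearb}, which exploits the cancellation $\int P_k u\,\partial_x(P_k u)\,P_{k_1}v\,dxdydt=-\tfrac12\int (P_k u)^2\,\partial_x P_{k_1}v\,dxdydt$ to move the derivative from frequency $2^{k}$ to frequency $2^{k_1}$, yielding the factor $2^{k_1/2}$ rather than $2^{k}$. To make this cancellation available one must use the decomposition \eqref{eq:trigoal}, in which the high-frequency factor inside $P_k(\,\cdot\,)$ is kept as the full $u$ rather than expanded dyadically; your immediate expansion $u^2=\sum_{k_1,k_2}P_{k_1}u\,P_{k_2}u$ destroys the symmetric structure before it can be used. (A secondary issue: the sharp cutoff $\mathbf{1}_{[0,t_k]}$ is not harmless on the $\ell^1$-in-modulation spaces $X_k$; this is handled inside Lemma \ref{lem:3linear}, not by smoothing the bumps.) Your third step's summation heuristics are broadly in the right spirit -- the $F^{1/2+}$ norm does enter through $\sum_{k_1}2^{k_1/2}\norm{u}_{F_{k_1}(T)}$ -- but they rest on the unavailable step two, so the proof as proposed does not go through.
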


\begin{proposition}\label{prop:energydiff}
Let $s>1/2$. Let $u_1,u_2 \in F^{s}(1)$ be solutions to \eqref{eq:kpI} with
initial data $\phi_1,\phi_2 \in H^{\infty,0}$ satisfying
\begin{eqnarray}\label{eq:energydiffsmall}
\norm{u_1}_{F^{s}(1)}+\norm{u_2}_{F^{s}(1)}\leq \epsilon_0\ll 1.
\end{eqnarray}
Then we have
\begin{eqnarray}\label{eq:L2conti}
\norm{u_1-u_2}_{F^{s-1}(1)}\les \norm{\phi_1-\phi_2}_{H^{s-1,0}},
\end{eqnarray}
and
\begin{eqnarray}\label{eq:H1conti}
\norm{u_1-u_2}_{F^{s}(1)}\les
\norm{\phi_1-\phi_2}_{H^{s,0}}+\norm{\phi_1}_{H^{s+1,0}}\norm{\phi_1-\phi_2}_{H^{s-1,0}}.
\end{eqnarray}
\end{proposition}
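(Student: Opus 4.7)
The plan: Set $w=u_1-u_2$, which solves
\begin{equation*}
w_t+\partial_x^3 w-\partial_x^{-1}\partial_y^2 w = -\tfrac{1}{2}\partial_x\bigl(w(u_1+u_2)\bigr),\qquad w(0)=\phi_1-\phi_2.
\end{equation*}
Both \eqref{eq:L2conti} and \eqref{eq:H1conti} will be obtained by combining Proposition \ref{prop:linear} applied to $w$, the bilinear estimates of Proposition \ref{prop:bilinear}, and difference analogues of the energy estimate of Proposition \ref{prop:energy}.

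For \eqref{eq:L2conti}, I would apply Proposition \ref{prop:linear} at regularity $s-1$ together with Proposition \ref{prop:bilinear}(b) (taking $u=w$ and $v=(u_1+u_2)/2$), obtaining
\begin{equation*}
\|w\|_{F^{s-1}(T)}\lesssim \|w\|_{E^{s-1}(T)}+\|w\|_{F^{s-1}(T)}\bigl(\|u_1\|_{F^s(T)}+\|u_2\|_{F^s(T)}\bigr).
\end{equation*}
In parallel I would mimic the proof of Proposition \ref{prop:energy} for the difference equation to establish
\begin{equation*}
\|w\|_{E^{s-1}(T)}^2\lesssim\|\phi_1-\phi_2\|_{H^{s-1,0}}^2+\bigl(\|u_1\|_{F^{1/2+}(T)}+\|u_2\|_{F^{1/2+}(T)}\bigr)\|w\|_{F^{s-1}(T)}^2.
\end{equation*}
By \eqref{eq:energydiffsmall} the factors $\|u_j\|_{F^s}$ and $\|u_j\|_{F^{1/2+}}$ are $\le\epsilon_0$, so these contributions absorb into the left side and \eqref{eq:L2conti} follows.

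For \eqref{eq:H1conti}, I would apply Proposition \ref{prop:linear} at regularity $s$ and the symmetric bilinear bound Proposition \ref{prop:bilinear}(a). Since $\|w\|_{F^0(T)}\le\|w\|_{F^s(T)}$ when $s\ge 0$, both summands reduce to $\lesssim\epsilon_0\|w\|_{F^s(T)}$ and absorption gives $\|w\|_{F^s(T)}\lesssim\|w\|_{E^s(T)}$. The main input is then a level-$s$ difference energy estimate of the form
\begin{equation*}
\|w\|_{E^s(T)}^2\lesssim\|\phi_1-\phi_2\|_{H^{s,0}}^2+\epsilon_0\|w\|_{F^s(T)}^2+\|u_1+u_2\|_{F^{s+1}(T)}\|w\|_{F^{s-1}(T)}\|w\|_{F^s(T)}.
\end{equation*}
Persistence of regularity from Theorem \ref{thmmain}(a) gives $\|u_j\|_{F^{s+1}(T)}\lesssim C(\|\phi_j\|_{H^{s+1,0}})$; combined with the already-proven \eqref{eq:L2conti}, an AM-GM on the asymmetric cross term, and a final absorption of $\epsilon_0\|w\|_{F^s(T)}^2$, this yields \eqref{eq:H1conti}.

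The main obstacle is the level-$s$ difference energy estimate. For a single solution the dangerous trilinear form $\int P_k\partial_x(u^2)\cdot P_k u$ benefits from a cancellation obtained by symmetrising the two copies of $u$; for the difference equation the two factors $w$ and $u_1+u_2$ play distinct roles, and this cancellation is unavailable in the high-high-to-low regime where a frequency-$\sim k$ piece of $u_1+u_2$ pairs with a low-frequency piece of $w$ to feed into $P_k w$. Retaining control here requires shifting one derivative onto $u_1+u_2$ and measuring $w$ in the lower norm $F^{s-1}$, and this is precisely what produces the $\|\phi_1\|_{H^{s+1,0}}$ loss in \eqref{eq:H1conti}. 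The requisite frequency-localised bounds would be derived from Lemma \ref{lem:3zest} and the short-time spaces $F_k,N_k$ exactly in the spirit of Proposition \ref{prop:energy}.
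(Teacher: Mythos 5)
Your treatment of \eqref{eq:L2conti} matches the paper's: linear estimate at level $s-1$, Proposition \ref{prop:bilinear}(b) for the Duhamel term, a difference energy estimate at level $s-1$ split into the commutator regime (Lemma \ref{lem:3linear}(b)) and the high-high regime, and absorption via \eqref{eq:energydiffsmall}. That part is fine.

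For \eqref{eq:H1conti} there is a genuine gap. Your proposed level-$s$ difference energy estimate carries the cross term $\norm{u_1+u_2}_{F^{s+1}(T)}\norm{w}_{F^{s-1}(T)}\norm{w}_{F^{s}(T)}$, and you then invoke persistence of regularity for \emph{both} solutions. This proves only the symmetric variant with $\norm{\phi_1}_{H^{s+1,0}}+\norm{\phi_2}_{H^{s+1,0}}$ on the right-hand side, which is strictly weaker than the stated \eqref{eq:H1conti}, where only $\phi_1$ appears at the higher regularity. The asymmetry is not cosmetic: in the Bona--Smith/continuity argument one compares solutions with data $P_{\leq n}\phi$ and $P_{\leq m}\phi$ for $m\gg n$, and only the \emph{lower}-frequency-truncated datum may be allowed to enter at regularity $s+1$; a bound involving $\norm{\phi_2}_{H^{s+1,0}}\sim 2^{m}$ cannot be compensated by $\norm{\phi_1-\phi_2}_{H^{s-1,0}}\sim 2^{-n}$. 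The missing idea is the substitution the paper makes in the dangerous high-high regime: writing $P_{k_1}(u_1+u_2)=P_{k_1}(2u_1\mp w)$, the $w\cdot w$ contribution $\int \partial_x P_k(w)\,P_k[P_{k_2}w\cdot P_{k_1}w]$ recovers exactly the commutator structure of Lemma \ref{lem:3linear}(b) (the high-frequency inner factor is again $w$, matching the outer $P_k w$), so it is absorbed as $\epsilon_0\norm{w}_{F^s(1)}^2$ with no derivative loss; only the remaining $u_1$ contribution pays the derivative, producing $\norm{u_1}_{F^{s+1}(1)}\norm{w}_{F^{s-1}(1)}\norm{w}_{F^{s}(1)}$ and hence $\norm{\phi_1}_{H^{s+1,0}}$ alone. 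Without this step your argument does not yield \eqref{eq:H1conti} as stated. You do correctly identify where the single-solution cancellation is lost and why a derivative must be shifted, so the fix is local: insert the substitution before estimating the high-high term.
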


It remains to prove the above propositions. Assume that $u,v\in C([-T,T];L^2)$ and
\begin{equation*}
\begin{cases}
\partial_tu+\partial_x^3u-\partial_x^{-1}\partial_y^2u=v \text{ on }\mathbb{R}^2_{x,y}\times\mathbb{R}_t;\\
u(0)=\phi,
\end{cases}
\end{equation*}
Then we multiply by $u$ and integrate to conclude that
\begin{equation}\label{eq:L2esti}
\sup\limits_{|t_k|\leq T}\norm{u(t_k)}_{L^2}^2\leq
\norm{\phi}_{L^2}^2+\sup\limits_{|t_k|\leq
T}\aabs{\int_{\R\times[0,t_k]}u\cdot v \ dxdydt}.
\end{equation}
In applications we usually take $v=\partial_x(u^2)$. This particular
term has a cancelation that we need to exploit.

\begin{lemma}[\cite{GPW}]\label{lem:3linear}
(a) Assume $T\in (0,1]$, $k_1,k_2,k_3 \in \Z_+$, and $u_i\in
F_{k_i}(T), i=1,2,3$. Then if $k_{min}\leq k_{max}-5$, we have
\begin{eqnarray}\label{eq:3lineara}
\aabs{\int_{\R^2\times [0,T]}u_1u_2u_3\ dxdydt}\les 2^{-k_{min}/2}
\prod_{i=1}^3 \norm{u_i}_{F_{k_i}(T)}.\label{eq:tri1}
\end{eqnarray}

(b) Assume $T\in (0,1]$, $ k\in \Z_+$, $0\leq k_1\leq k-10$, $u \in F^0(T)$ and $v\in F_{k_1}(T)$. Then we have
\begin{eqnarray}\label{eq:3linearb}
\aabs{\int_{\R^2\times
[0,T]}{P}_k(u)\partial_x {P}_k( u \cdot
{P}_{k_1}(v))dxdydt}\les 2^{k_{1}/2}
\norm{v}_{F_{k_1}(T)}\sum_{|k'-k|\leq
10}\norm{{P}_{k'}(u)}_{F_{k'}(T)}^2.
\end{eqnarray}
If $k_1=0$, \eqref{eq:3linearb} also holds if ${P}_{k_1}$
is replaced by ${P}_{\leq 0}$.
\end{lemma}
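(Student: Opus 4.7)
The plan is to derive both parts from the dyadic bilinear bound Lemma 2.2 and the $X_k$ machinery, with part (b) requiring an additional symmetrization step to exploit the cancellation from the two identical copies of $u$.

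For part (a), I would first use the definition of $F_{k_i}(T)$ to extend each $u_i$ to all of $\R^3$ with comparable norm, then insert sharp time cutoffs at the fine scale $2^{-k_{\max}}$ via \eqref{eq:pXk3}, partitioning $[-T,T]$ accordingly. On each time-localized piece, Plancherel converts $|\int u_1 u_2 u_3 \, dxdydt|$ into a trilinear convolution, and dyadically decomposing in modulation $j_i$ and applying Lemma 2.2(a) gives
\[
2^{(j_1+j_2+j_3)/2}\min\!\bigl(2^{-(k_1+k_2+k_3)/2},\, 2^{-j_{\max}/2}\bigr)\prod_i \|f_{i,j_i}\|_{L^2}.
\]
The $j_i$-summation closes by splitting based on the size of $j_{\max}$ relative to $k_1+k_2+k_3$ and using Cauchy--Schwarz together with $\sum_j 2^{-j}<\infty$ to pass from $\sum_j \|f_{i,j}\|_{L^2}$ to $\|f_i\|_{X_{k_i}}$. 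After accounting for the number of time-localized intervals $\lesssim 2^{k_{\max}}$ on $[-T,T]$ and using the Fourier constraint $\xi_1+\xi_2+\xi_3=0$ combined with $k_{\min} \leq k_{\max}-5$ (which forces $k_{\max} \sim k_{\mathrm{med}}$), the net bound is $2^{-k_{\min}/2} \prod_i \|u_i\|_{F_{k_i}(T)}$.

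For part (b), the key observation is that the two $u$ factors allow a Fourier symmetrization that converts the apparent $2^k$ derivative loss into $2^{k_1}$. Using $[\partial_x, P_k]=0$ and self-adjointness of $P_k$, one first reduces
\[
\int P_k u\cdot \partial_x P_k(u\cdot P_{k_1}v)\,dxdydt = \int P_k u\cdot \partial_x(u\cdot P_{k_1}v)\,dxdydt.
\]
In Fourier, under the constraint $\xi_1+\xi_2+\xi_3=0$, this integral carries the multiplier $m=-i\xi_1\chi_k^2(\xi_1)\chi_{k_1}(\xi_3)$ (since $\chi_k(\xi_2+\xi_3)=\chi_k(-\xi_1)=\chi_k(\xi_1)$). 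Symmetrizing in $\xi_1\leftrightarrow\xi_2$, which is justified because $\widehat{u}(\xi_1)\widehat{u}(\xi_2)$ is symmetric, and writing $\xi_2=-\xi_1-\xi_3$, yields
\[
m_{\mathrm{sym}}=-\tfrac{i}{2}\bigl[\xi_1\bigl(\chi_k^2(\xi_1)-\chi_k^2(\xi_1+\xi_3)\bigr)+\xi_3\chi_k^2(\xi_1+\xi_3)\bigr]\chi_{k_1}(\xi_3).
\]
The Lipschitz bound $|\chi_k^2(\xi_1)-\chi_k^2(\xi_1+\xi_3)|\lesssim 2^{-k}|\xi_3|$ together with $|\xi_1|\sim 2^k$, $|\xi_3|\sim 2^{k_1}$ gives $|m_{\mathrm{sym}}|\lesssim 2^{k_1}$, replacing the naive $2^k$ bound on $|\partial_x|$.

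To close, expand $u=\sum_{k'}P_{k'}u$; the support relation $|\xi_2|=|\xi_1+\xi_3|\sim 2^k$ restricts the sum to $|k'-k|\leq 10$. Applying the trilinear machinery of part (a) with the improved multiplier size $2^{k_1}$ in place of $|\xi|$ then produces
\[
|I|\lesssim 2^{k_1}\cdot 2^{-k_1/2}\sum_{|k'-k|\leq 10}\|P_k u\|_{F_k(T)}\|P_{k'}u\|_{F_{k'}(T)}\|v\|_{F_{k_1}(T)},
\]
and AM--GM on $\|P_k u\|_{F_k(T)}\|P_{k'}u\|_{F_{k'}(T)}$ yields the desired sum $\sum_{|k'-k|\leq 10}\|P_{k'}u\|^2_{F_{k'}(T)}$. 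The case $k_1=0$ is handled identically after replacing $\chi_{k_1}$ by $\eta_0$. The main technical obstacle is inserting the symbol bound on $m_{\mathrm{sym}}$ into the trilinear machinery rigorously: one must factor $m_{\mathrm{sym}} = 2^{k_1}\tilde m$ with $\tilde m$ a bounded multiplier and absorb $\tilde m$ into one of the $L^2$ factors in Lemma 2.2 without spoiling the $X_k$ summation, which is a standard but bookkeeping-heavy check.
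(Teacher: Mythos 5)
The paper does not actually prove this lemma (it is quoted from \cite{GPW}), but your argument reproduces the standard proof given there: for part (a), time-localization into $O(2^{k_{\max}})$ pieces of length $2^{-k_{\max}}$ combined with Lemma \ref{lem:3zest}(a) (the factor $2^{k_{\max}}\cdot 2^{-(k_1+k_2+k_3)/2}\sim 2^{-k_{\min}/2}$ since the two largest frequencies are comparable), and for part (b) the multiplier symmetrization in $\xi_1\leftrightarrow\xi_2$ --- equivalent to the commutator identity $P_k(uP_{k_1}v)=P_ku\cdot P_{k_1}v+[P_k,P_{k_1}v]u$ followed by integration by parts --- which trades the derivative loss $2^{k}$ for $2^{k_1}$; absorbing the resulting bounded symbol is immediate because \eqref{eq:3zesta} is stated for nonnegative $L^2$ functions and the $X_k$ norms see only $|\widehat{u}|$. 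The only point left implicit is the handling of the sharp cutoff $1_{[0,T]}$ on the two boundary subintervals, which is precisely where the $2^{-j_{\max}/2}$ alternative in \eqref{eq:3zesta} is needed; this is standard and does not affect the conclusion.
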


\begin{proof}[Proof of Proposition \ref{prop:energy}]
From definition we have
\EQN{
\norm{u}_{E^s(T)}^2-\norm{P_{\leq 0}(u_0)}_{L^2}^2\les \sum_{k\geq
1}\sup_{t_k\in [-T,T]}2^{2sk}\norm{{P}_k(u(t_k))}_{L^2}^2.
}
Then we can get from \eqref{eq:L2esti} that
\begin{eqnarray}\label{eq:energyeq}
2^{2sk}\norm{{P}_k(u(t_k))}_{L^2}^2-2^{2sk}\norm{{P}_k(u_0)}_{L^2}^2\les
2^{2sk}\left|\int_{\R\times [0,t_k]}{P}_k(u){P}_k\partial_x(u\cdot
u)dxdt\right|.
\end{eqnarray}
It is easy to see that the right-hand side of \eqref{eq:energyeq} is
dominated by
\EQ{\label{eq:trigoal}
&2^{2sk}\left|\int_{\R\times
[0,t_k]}{P}_k(u)\partial_x{P}_k({P}_{\leq k-10}u\cdot u)dxdt\right|\\
&+2^{2sk}\left|\int_{\R\times
[0,t_k]}{P}_k(u)\partial_x{P}_k({P}_{\geq k-9}u\cdot P_{\leq k-10}u)dxdt\right|\\
&+2^{2sk}\left|\int_{\R\times
[0,t_k]}{P}_k(u)\partial_x{P}_k({P}_{\geq k-9}u\cdot P_{\geq k-9}u)dxdt\right|.
}
For the first two terms in \eqref{eq:trigoal},  using \eqref{eq:3linearb}
then we get that it is bounded by
\EQN{
&2^{2sk}\sum_{k_1\leq k-10} 2^{k_1/2}
\norm{u}_{F_{k_1}(T)}\sum_{|k'-k|\leq
10}\norm{{P}_{k'}(u)}_{F_{k'}(T)}^2\\
\les& \norm{u}_{F^{1/2+}(T)}2^{2sk}\sum_{|k'-k|\leq
10}\norm{u}_{F_{k'}(T)}^2
}
which implies that the summation over $k\geq 1$ is bounded by
$\norm{u}_{F^{1/2+}(T)}\norm{u}_{F^{s}(T)}^2$ as desired.

For the third term in \eqref{eq:trigoal}, using \eqref{eq:tri1} we
get that it is bounded by
\EQN{
& 2^{2sk}\sum_{k_1,k_2\geq k-9}2^k 2^{-k/2}\norm{{P}_k(u)}_{F_k(T)}\norm{{P}_{k_1}(u)}_{F_{k_1}(T)}\norm{{P}_{k_2}(u)}_{F_{k_2}(T)}.
}
Then it is easy to see that summation over $k\geq 1$ is bounded by
$\norm{u}_{F^{1/2+}(T)}\norm{u}_{F^{s}(T)}^2$. We complete the proof of
the proposition.
\end{proof}

\begin{proof}[Proof of Proposition \ref{prop:energydiff}]
We prove first \eqref{eq:L2conti}. Let $v=u_2-u_1$, then $v$ solves
the equation
\begin{eqnarray}\label{eq:energydiffprf1}
\left\{
\begin{array}{l}
\partial_t v+\partial_x^3 v-\partial_x^{-1}\partial_y^2 v=-\partial_x[v(u_1+u_2)/2];\\
v(0)=\phi=\phi_2-\phi_1.
\end{array}
\right.
\end{eqnarray}
Then from Proposition \ref{prop:linear} and Proposition
\ref{prop:bilinear} (b) we obtain
\begin{eqnarray}\label{eq:energydiffprf1}
\left\{
\begin{array}{l}
\norm{v}_{F^{s-1}(1)}\les \norm{v}_{E^{s-1}(1)}+\norm{\partial_x[v(u_1+u_2)/2]}_{N^{s-1}(1)};\\
\norm{\partial_x[v(u_1+u_2)/2]}_{N^{s-1}(1)}\les
\norm{v}_{F^{s-1}(1)}(\norm{u_1}_{F^{s}(1)}+\norm{u_2}_{F^{s}(1)}).
\end{array}
\right.
\end{eqnarray}
We derive an estimate on $\norm{v}_{E^{s-1}(1)}$. As in the proof of
Proposition \ref{prop:energy}, we get from \eqref{eq:L2esti} that
\EQ{\label{eq:energydiffprf2}
&\norm{v}_{E^{s-1}(1)}^2-\norm{\phi}_{E^{s-1}}^2 \\
\les&
\sum_{k\geq1}2^{2k(s-1)}\left|\int_{\R\times
[0,t_k]} {P}_k(v) {P}_k \partial_x (v \cdot
(u_1+u_2))dxdt\right|\\
\les& \sum_{k\geq 1}\sum_{k_1\leq k-10}2^{2k(s-1)}\left|\int_{\R\times
[0,t_k]} {P}_k(v) {P}_k \partial_x [v\cdot {P}_{k_1}(u_1+u_2)] dxdt\right|\\
&+\sum_{k\geq 1}\sum_{k_1\geq k-9}2^{2k(s-1)}\left|\int_{\R\times
[0,t_k]} {P}_k(v) {P}_k \partial_x [v\cdot {P}_{k_1}(u_1+u_2)] dxdt\right|\\
}
For the first term on right-hand side of \eqref{eq:energydiffprf2},
using Lemma \ref{lem:3linear} we can bound it by
\EQN{
&\sum_{k\geq 1}\sum_{k_1\leq k-10}2^{2k(s-1)}\left|\int_{\R\times
[0,t_k]} {P}_k(v) {P}_k \partial_x [v\cdot {P}_{k_1}(u_1+u_2)] dxdt\right|\\
&\les\norm{v}_{F^{s-1}(1)}^2(\norm{u_1}_{F^{1/2+}(1)}+\norm{u_2}_{F^{1/2+}(1)}),
}
The second term on right-hand side of \eqref{eq:energydiffprf2} is
dominated by
\EQN{
&\sum_{k\geq 1}\sum_{k_1\geq k-9}\sum_{k_2\geq 0}2^{2k(s-1)}\left|\int_{\R\times
[0,t_k]} \partial_x {P}_k(v) {P}_k [P_{k_2}v\cdot {P}_{k_1}(u_1+u_2)] dxdt\right|\\
\les&
\norm{v}_{F^{s-1}(1)}^2(\norm{u_1}_{F^{s}(1)}+\norm{u_2}_{F^{s}(1)}).
}
Therefore, we obtain the following estimate
\begin{eqnarray}
\norm{v}_{E^{s-1}(1)}^2\les
\norm{\phi}_{H^{s-1,0}}^2+\norm{v}_{F^{s-1}(1)}^2(\norm{u_1}_{F^{s}(1)}+\norm{u_2}_{F^{s}(1)}),
\end{eqnarray}
which combined with \eqref{eq:energydiffprf2} implies
\eqref{eq:L2conti} in view of \eqref{eq:energydiffsmall}.

We prove now \eqref{eq:H1conti}. From Proposition \ref{prop:linear}
and \ref{prop:bilinear} we obtain
\begin{eqnarray}\label{eq:energydiff2prf1}
\left\{
\begin{array}{l}
\norm{v}_{F^{s}(1)}\les \norm{v}_{E^{s}(1)}+\norm{\partial_x[v(u_1+u_2)/2]}_{N^{s}(1)};\\
\norm{\partial_x[v(u_1+u_2)/2]}_{N^{s}(1)}\les
\norm{v}_{F^{s}(1)}(\norm{u_1}_{F^{s}(1)}+\norm{u_2}_{F^{s}(1)}).
\end{array}
\right.
\end{eqnarray}
Since $\norm{P_{\leq 0}(v)}_{E^{s}(1)}=\norm{P_{\leq
0}(\phi)}_{L^2}$, it follows from \eqref{eq:energydiffsmall} that
\begin{eqnarray}\label{eq:energyv}
\norm{v}_{F^{s}(1)}\les \norm{P_{\geq
1}(v)}_{E^{s}(1)}+\norm{\phi}_{H^{s,0}}.
\end{eqnarray}
To bound $\norm{P_{\geq 1}(v)}_{E^{s}(1)}$, we have
\EQN{
&\norm{v}_{E^{s}(1)}^2-\norm{\phi}_{E^{s}}^2 \\
\les&
\sum_{k\geq1}2^{2ks}\left|\int_{\R\times
[0,t_k]} {P}_k(v) {P}_k \partial_x (v \cdot
(u_1+u_2))dxdt\right|\\
\les& \sum_{k\geq 1}\sum_{k_1\leq k-10}2^{2ks}\left|\int_{\R\times
[0,t_k]} {P}_k(v) {P}_k \partial_x [v\cdot {P}_{k_1}(u_1+u_2)] dxdt\right|\\
&+\sum_{k\geq 1}\sum_{k_1\geq k-9}2^{2ks}\left|\int_{\R\times
[0,t_k]} {P}_k(v) {P}_k \partial_x [v\cdot {P}_{k_1}(u_1+u_2)] dxdt\right|\\
}
For the first term on right-hand side of \eqref{eq:energydiffprf2},
using Lemma \ref{lem:3linear} we can bound it by
\EQN{
&\sum_{k\geq 1}\sum_{k_1\leq k-10}2^{2ks}\left|\int_{\R\times
[0,t_k]} {P}_k(v) {P}_k \partial_x [v\cdot {P}_{k_1}(u_1+u_2)] dxdt\right|\\
&\les\norm{v}_{F^{s}(1)}^2(\norm{u_1}_{F^{1/2+}(1)}+\norm{u_2}_{F^{1/2+}(1)}),
}
The second term on right-hand side of \eqref{eq:energydiffprf2} is
dominated by
\EQN{
&\sum_{k\geq 1}\sum_{k_1\geq k-9}\sum_{k_2\geq 0}2^{2ks}\left|\int_{\R\times
[0,t_k]} \partial_x {P}_k(v) {P}_k [P_{k_2}v\cdot {P}_{k_1}(u_1+u_2)] dxdt\right|\\
\les &\sum_{k\geq 1}\sum_{k_1\geq k-9}\sum_{k_2\geq 0}2^{2ks}\left|\int_{\R\times
[0,t_k]} \partial_x {P}_k(v) {P}_k [P_{k_2}v\cdot {P}_{k_1}(2u_1-v)] dxdt\right|\\
\les&
\norm{v}_{F^{s}(1)}^2(\norm{u_1}_{F^{s}(1)}+\norm{u_2}_{F^{s}(1)})+
\norm{v}_{F^{s-1}(1)}\norm{v}_{F^{s}(1)}\norm{u_1}_{F^{s+1}(1)}.
}
Therefore, we complete the proof. 
\end{proof}

\section*{Acknowledgment}
The author is supported by ARC FT230100588. The author is very grateful to Luc Molinet for the discussions.

\end{document}